\newcommand{\argmin}{\operatornamewithlimits{argmin}}
\newtheorem{theorem}{Theorem}
\newtheorem{lemma}{Lemma}
\newtheorem{assumption}{Assumption}
\newenvironment{assumptionp}[1]{
  
  \assumptionalt
}{\endassumptionalt}
\newtheorem{definition}{Definition}%
\begin{document}

\title[Goldstein Stationarity under Constraints]{Goldstein Stationarity in Lipschitz Constrained Optimization}


\author[1]{\fnm{Benjamin} \sur{Grimmer}}\email{grimmer@jhu.edu}
\equalcont{These authors contributed equally to this work.}

\author[2]{\fnm{Zhichao} \sur{Jia}}\email{zjia75@gatech.edu}
\equalcont{These authors contributed equally to this work.}

\affil[1]{\orgdiv{Department of Applied Mathematics and Statistics}, \orgname{Johns Hopkins University}, \orgaddress{\city{Baltimore}, \state{MD}, \country{USA}}}

\affil[2]{\orgdiv{H. Milton Stewart School of Industrial and Systems Engineering}, \orgname{Georgia Institute of Technology}, \orgaddress{\city{Atlanta}, \state{GA}, \country{USA}}}


\abstract{We prove the first convergence guarantees for a subgradient method minimizing a generic Lipschitz function over generic Lipschitz inequality constraints. No smoothness or convexity (or weak convexity) assumptions are made. Instead, we utilize a sequence of recent advances in Lipschitz unconstrained minimization, which showed convergence rates of $O(1/\delta\epsilon^3)$ towards reaching a ``Goldstein'' stationary point, that is, a point where an average of gradients sampled at most distance $\delta$ away has size at most $\epsilon$. We generalize these prior techniques to handle functional constraints, proposing a subgradient-type method with similar $O(1/\delta\epsilon^3)$ guarantees on reaching a Goldstein Fritz-John or Goldstein KKT stationary point, depending on whether a certain Goldstein-style generalization of constraint qualification holds.}

\keywords{Constrained Optimization, Nonsmooth Nonconvex Optimization, Goldstein Stationarity, Subgradient Methods, Constraint Qualification}



\maketitle

\section{Introduction}
This work considers a quite general family of constrained optimization problems where both the objective and constraint functions may be nonconvex and nonsmooth. The only structure assumed is Lipschitz continuity of these functions. Specifically, we consider problems of the following form
\begin{equation}
p_\star = \begin{cases}
    \min_{x \in \mathbb{R}^n} \quad &f(x) \\
    \mathrm{s.t.} \quad & g_i(x) \leq 0, \qquad i=1,...,m 
    \label{originalmainproblem}
\end{cases}
\end{equation}
with objective $f:\mathbb{R}^n \to \mathbb{R}$ and constraints $g_i:\mathbb{R}^n \to \mathbb{R}$. We assume these functions are $M$-Lipschitz continuous on a neighborhood of the feasible region $\{x \mid g_i(x)\leq 0\}$, but do not assume convexity or differentiability. 
We propose an iterative subgradient-type method that only computes one gradient (or subgradient-like vector) of one of these functions at each iteration. For now, we leave general the exact notion of our first-order oracle $\mathcal{G}_h(x)$ and associated subdifferential by $\partial h(x)$. Throughout, this will either be the Clarke subdifferential or a certain nonstandard directional subdifferential.

For unconstrained minimization (i.e., $m=0$), in 1977, Goldstein~\cite{goldstein1977optimization} proposed an idealized algorithm achieving descent by repeatedly moving in minimal norm subgradient directions, eventually reaching a related notion of approximate stationarity. Mahdavi-Amiri and Yousefpou~\cite{mahdavi2012effective} revisited this idea, giving a more practical minimization algorithm approximating these descent directions. {\color{black} Curtis and Overton~\cite{curtis2012sequential} proposed a sequential quadratic programming method based on seeking Goldstein descent directions, providing limiting convergence guarantees, although no convergence rates.} The groundbreaking work of Zhang et al.~\cite{zhang2020complexity} showed assuming only Lipschitz continuity is sufficient for provable convergence rates towards stationarity. Subsequently, Davis et al.~\cite{davis2022gradient} and Kong and Lewis~\cite{kong2023cost} built on this using the more standard Clarke subgradient oracle and avoiding the use of randomness, respectively. A key insight enabling these works to prove guarantees for such generic problems is considering a weakened notion of stationarity, seeking points where a convex combination of gradients computed nearby is small. Formally, we say $x$ is an {\color{black} $(\delta,\epsilon)$-Goldstein stationary point} of $h$ if
$ \mathrm{dist}(0,\partial_\delta h(x))\leq \epsilon$ where $\partial_\delta h(x)$ denotes the Goldstein subdifferential defined as
$${\color{black} \partial_\delta h(x) = \mathrm{conv}\left\{ \bigcup_{y\in B(x,\delta)} \partial h(y)\right\} }$$
where $B(x,\delta)$ is the closed ball of radius $\delta$ around $x$ and $\mathrm{conv}()$ denotes the convex hull.
This condition is a natural relaxation of the optimality condition $0\in\partial h(x)$.

Davis et al.~\cite{davis2022gradient} showed for any Lipschitz $h$, using $O(1/\delta\epsilon^3)$ gradient evaluations, an $(\epsilon,\delta)$-Goldstein stationary point can be found in expectation (where the randomness arises from sampling a random point for each gradient evaluation). Kong and Lewis~\cite{kong2023cost} showed an entirely deterministic method could find such a point in $O(1/\delta\epsilon^4)$ subgradient-like evaluations (although depending on a certain nonconvexity modulus). Cutkosky et al.~\cite{CutkoskyMO23} derived a $O(1/\delta\epsilon^3)$ rate using only stochastic gradient evaluations (beyond the scope of this work) and matching lower bounds when $M=\Omega(\sqrt{\frac{\epsilon}{\delta}})$. Both Davis et al.~and Kong and Lewis use a double-loop method, only differing in their inner loop's implementation, which approximates Goldstein's descent direction.

This work generalizes these prior works to the functionally constrained setting~\eqref{originalmainproblem}. We show that a simple modification of previous double-loop methods can solve generic constrained problems of the form~\eqref{originalmainproblem}. To analyze this, we propose new Goldstein-type generalizations of classic Fritz-John, KKT, and Constraint Qualification conditions (see Definitions~\ref{def_gfj}-\ref{def_gcq}). Our Theorems~\ref{theorem_FJ} and~\ref{theorem_KKT} show that using the same inner-loop as Davis et al.~\cite{davis2022gradient}, an approximate Goldstein Fritz John or KKT point is reached at the same rate, depending on whether constraint qualification holds, in expectation. Similarly, using the inner loop of Kong and Lewis~\cite{kong2023cost} extends their deterministic rate to functional constraints. Associated dual multipliers can be extracted from our method, so stopping criteria and certificates of approximate stationarity follow immediately. 

\subsection{Related Works and a Sketch of our Algorithm}
We first review classic results on Lagrangian optimality conditions and provably good methods for weakly convex constrained problems and for Lipschitz unconstrained problems. Subsequently, Section~\ref{section_algorithm&convergence} formalizes all the necessary definitions and our algorithm. Section~\ref{section_analysis} then states and proves our convergence theorems.

\noindent {\bf Lagrangians, KKT, and Constraint Qualification.} 
The Lagrangian of~\eqref{originalmainproblem} for any nonnegative Lagrange multipliers $\lambda\in\mathbb{R}^m$ is denoted by $L(x, \lambda)=f(x)+ \sum_{i=1}^m \lambda_i g_i(x)$.
We say a feasible point $x^\star\in\mathbb{R}^n$ is KKT stationary if there exist such multipliers where
\begin{equation} \label{eq:KKT}
    \begin{cases} \partial f(x^\star) + \sum_{i=1}^m\lambda_i \partial g_i(x^\star) \ni 0 \ ,\\
    \lambda_i g_i(x^\star)=0, \quad \forall i=1\dots m \ . \end{cases}
\end{equation}
Generally, this condition is not necessary for $x^\star$ to be a minimizer of~\eqref{originalmainproblem}. Instead, one can only ensure every minimizer $x^\star$ is Fritz-John (FJ) stationary, meaning that there exist nonnegative multipliers $\gamma_0,\dots \gamma_m$, not all zero, such that
\begin{equation} \label{eq:FJ}
    \begin{cases}\gamma_0\partial f(x^\star) + \sum_{i=1}^m\gamma_i \partial g_i(x^\star) \ni 0 \ ,\\
    \gamma_i g_i(x^\star)=0, \quad \forall i=1\dots m \ . \end{cases}
\end{equation}
The above FJ condition implies the KKT condition whenever the ``constraint qualification'' (CQ) condition $0\not\in \partial \max\{g_i(x) \mid g_i(x^*)=0 \}(x^\star)$ holds at $x^\star$.\footnote{In particular, constraint qualification ensures $\gamma_0$ is positive. Hence $\lambda_i = \gamma_i/\gamma_0$ satisfies KKT.} Hence, when CQ holds, KKT is a necessary optimality condition. Our theory gives guarantees on the approximate attainment of FJ or KKT conditions, depending on whether a slightly strengthened constraint qualification condition holds. 

\noindent {\bf Prior {\color{black} Convergence Rates} for Weakly Convex Constrained Problems.} 
Previous works~\cite{ma2020quadratically,boob2023stochastic,jia2022first} have addressed nonsmooth nonconvex constrained problems under the additional assumption that $f,g_0,\dots,g_m$ are all $\rho$-weakly convex. A function $h$ is $\rho$-weakly convex if $h + \frac{\rho}{2}\|\cdot\|^2$ is convex. Under this additional condition, one can achieve an objective decrease on~\eqref{originalmainproblem} by solving the following strongly convex proximal subproblem
\begin{align}\label{eq:proximal-approach}
\begin{cases}
    \min_{x \in \mathbb{R}^n} \quad &f(x)+\frac{\hat{\rho}}{2}\|x-x_k\|^2 \\
    \mathrm{s.t.} \quad &g_i(x)+\frac{\hat{\rho}}{2}\|x-x_k\|^2 \leq 0, \qquad i=1,...,m
\end{cases}
\end{align}
for any $\hat\rho > \rho$. The works~\cite{ma2020quadratically,boob2023stochastic,jia2022first} all considered double-loop methods where the inner loop approximately solves this subproblem to produce the next iterate of the outer loop. In all these works, convergence rates on the order of $O(1/\epsilon^4)$ were proven towards approximate KKT stationarity holding near the iterates. Additionally,~\cite{jia2022first} showed that without constraint qualification, approximate FJ stationarity is still reached at the same rate. A direct, single-loop approach was recently developed by~\cite{huang2024oracle}, attaining the same rates via a much simpler iteration.

\noindent {\bf Prior {\color{black} Convergence Rates} for Lipschitz Unconstrained Problems.} 
Inspired by Goldstein~\cite{goldstein1977optimization}, previous works~\cite{mahdavi2012effective,zhang2020complexity,davis2022gradient,kong2023cost,CutkoskyMO23} have proposed their algorithms for solving nonsmooth nonconvex unconstrained problems under Lipschitz continuity. These works considered double-loop methods, where the inner loop computes an approximate minimal norm element of the Goldstein subdifferential, either being close to zero or producing at least some descent on the objective function. These descent directions are then followed in an outer loop until a small Goldstein subgradient is found. 

\noindent {\bf Sketch of our Method for Lipschitz, Constrained Problems.} 
Without weak convexity, the proximal subproblem~\eqref{eq:proximal-approach} becomes intractable. Instead, at each outer iteration $k$, we consider the unconstrained Lipschitz subproblem of minimizing $h_{x_k}(x) := \max\{f(x)-f(x_k), {\color{black}g_1(x),...,g_m(x)}\}$.
Using a subgradient norm minimizing inner loop, we either identify a descent on $h_{x_k}$ from $x_k$, yielding a feasible descent for~\eqref{originalmainproblem}, or certify that $x_k$ is approximately Goldstein stationary to $h_{x_k}$, yielding approximate FJ or KKT stationarity for~\eqref{originalmainproblem}.

\section{Goldstein KKT Stationarity and our Algorithm}
\label{section_algorithm&convergence}

For a fixed notion of subdifferential $\partial h(x)$ (two such models are formalized in Sections~\ref{subsec:Damek} and~\ref{subsec:Lewis}), we define Goldstein-type measures of approximate notions of Fritz-John~\eqref{eq:FJ} and KKT stationarities~\eqref{eq:KKT} for the constrained problem~\eqref{originalmainproblem}.

\begin{definition}
    We say a feasible point $x$ is $(\delta,\epsilon,\eta)$-Goldstein Fritz-John (GFJ) stationary for problem \eqref{originalmainproblem} if there exists $(\gamma_0,\gamma)\geq 0$ with $\gamma_0+\sum_{i=1}^m\gamma_i=1$, such that $\mathrm{dist}(0, \gamma_0\partial_\delta f(x)+\sum_{i=1}^m\gamma_i\partial_\delta g_i(x)) \leq \epsilon$ and $\max_{y \in B(x,\delta)}|\gamma_ig_i(y)| \leq \eta, \forall i=1,...,m$.
    \label{def_gfj}
\end{definition}

\begin{definition}
    We say a feasible point $x$ is $(\delta,\epsilon,\eta)$-Goldstein KKT (GKKT) stationary for problem \eqref{originalmainproblem} if there exists $\lambda\geq 0$ such that $\mathrm{dist}(0, \partial_\delta f(x)+\sum_{i=1}^m\lambda_i\partial_\delta g_i(x)) \leq \epsilon$ and $\max_{y \in B(x,\delta)}|\lambda_ig_i(y)| \leq \eta, \forall i=1,...,m$.
    \label{def_gkkt}
\end{definition}

GFJ stationarity implies GKKT stationarity when $\gamma_0>0$ in Definition~\ref{def_gfj}. A larger value of $\gamma_0$ leads to stronger guarantees on approximate GKKT stationarity. In particular, GFJ stationarity is equivalent to GKKT stationarity if $\gamma_0=1$. Constraint Qualification (CQ) classically provides a means to bound $\gamma_0$ away from zero. Below, we generalize CQ to use Goldstein subdifferentials.

{\color{black}
\begin{definition}
    We say a feasible point $x$ satisfies $(a, b, c)$-Goldstein Constraint Qualification (GCQ) if $\mathrm{dist}(0, \sum_{i=1}^m\gamma_i\partial_ag_{i}(x)) \geq b$ holds for all $\gamma \geq 0$ satisfying $\sum_{i=1}^m\gamma_i=1$ and having $\gamma_i=0$ if $g_i(x)<-c$.
    \label{def_gcq}
\end{definition}

GCQ requires a uniform lower bound $b$ for the size of any convex combination of the nearby subgradients (within distance $a$) of constraint functions that are nearly active ($g_i(x)\geq -c$). We compare GCQ with MFCQ (a type of CQ, see~\cite{jia2022first} for its generalization to nonsmooth problems). With $a=0$, $b=\sigma$ and $c=0$, GCQ is equivalent to the ``$\sigma$-strong MFCQ'' defined in~\cite{jia2022first}, which requires a uniform lower bound $\sigma$ for the size of any convex combination of the subgradients of all the active constraint functions at any feasible $x$. GCQ is stronger than MFCQ under positive $a$ and $c$.\footnote{\color{black} We illustrate the relationship between GCQ and MFCQ through a pair of simple examples. Consider a single constraint $g_1(x):=x^2-1\leq 0$ for $x\in\mathbb{R}$, at any $x$, $\sigma$-strong MFCQ holds with $\sigma=2$, while $(a,b,c)$-GCQ holds with any $c\in(0,1)$, $a\in\left(0,\sqrt{1-c}\right)$ and $b=2\left(\sqrt{1-c}-a\right)<2$. If we add another constraint $g_2(x)$ that is defined as $|x|-1$ if $x\in[-1.5,1.5]$ and $0.5$ otherwise, then at any $x$, $\sigma$-strong MFCQ holds with $\sigma=1$, while $(a,b,c)$-GCQ holds with any $c\in(0,1)$, $a\in\left(0,\min\{1-c,0.5\}\right)$ and $b=\min\left\{2\left(\sqrt{1-c}-a\right),1\right\}\leq 1$.}}

\subsection{The Proposed Constrained Goldstein Subgradient Method}
We make the following pair of modeling assumptions throughout.

\begin{assumption}
    The optimal objective value $p_\star$ is finite and for some $\Delta>0$, $f$ and each $g_i$ are $M$-Lipschitz continuous within distance $\Delta$ of $\{x \mid g_i(x)\leq 0\}$.\label{assumption1}
\end{assumption}
    
\begin{assumption}
An initial feasible point $x_0$ is known (i.e. $g(x_0) \leq 0$).\label{assumption2}
\end{assumption}
Without loss of generality, the $m$ constraints of~\eqref{mainproblem} can be combined into one:
\begin{equation}
p_\star = \begin{cases}
     \min_{x \in \mathbb{R}^n} \quad &f(x) \\
    \mathrm{s.t.} \quad &g(x):=\max_{i=1,...,m}g_i(x) \leq 0 \ .
    \label{mainproblem}
\end{cases}
\end{equation}
Our proposed subgradient method then maintains a feasible solution $x_k$ and proceeds by, in an inner loop, computing in a descent direction of
\begin{align} \label{eq:subproblem}
    \min h_{x_k}(z) := \max\{f(z)-f(x_k), g(z)\} \ ,
\end{align}
and then moving in this direction a fixed distance $\delta<\Delta$ in an outer loop. To facilitate this, we require some subgradient-type oracle $\mathcal{G}$ be provided for $f$, $g$, and $h_x$. The two particular computational models we consider here are formalized in Assumptions~\ref{assumption3a} and~\ref{assumption3b}, with $\mathcal{G}$ as a gradient almost everywhere or as a specialized directional derivative matching subgradient, respectively. In general, all we require is the following.

\begin{assumption}
For any $x,z \in \mathbb{R}^n$, subgradient-type oracles are known with $\mathcal{G}_f(z) \in \partial f(z), \mathcal{G}_g(z) \in \partial g(z), \mathcal{G}_{h_x}(z) \in \partial h_x(z)$ satisfying
$$\mathcal{G}_{h_{x}}(z) \in \begin{cases}
\{\mathcal{G}_f(z)\} & \text{ if } f(z)-f(x) > g(z)\\
\{\mathcal{G}_f(z), \mathcal{G}_g(z)\} & \text{ if } f(z)-f(x) = g(z)\\
\{ \mathcal{G}_g(z)\} & \text{ if } f(z)-f(x) < g(z)\ . \end{cases}$$ \label{assumption3}
\end{assumption}

We require a subroutine outputting an approximate minimal norm element of the Goldstein subdifferential set of $h_{x_k}$ at $x_k$ in each outer iteration $k$. 
We consider any subroutine $\mathcal{A}(x,h,\delta,\epsilon)$ producing a direction $\zeta\in\mathbb{R}^n$  defined by $t \leq T_{\mathcal{A}}(M,\epsilon,\delta,\tau)$ subgradient oracle calls with probability at least $(1-\tau)$ satisfying
$$\|\zeta\| \leq \epsilon \qquad \mathrm{or} \qquad h(x)-h(x-\delta \zeta / \|\zeta\|) \geq C\delta \epsilon \ ,$$
with $\zeta =\sum_{i=1}^t w_i \mathcal{G}_{h_x}(z_i)$ for some points $z_i \in B(x, \delta)$ and weights $w\geq 0, \sum_{i=1}^t w_i=1$ for some fixed $C \in (0, 1)$ and $\tau \in [0, 1)$.

\begin{algorithm}[t!]
\caption{Constrained Goldstein Subgradient Method}
\label{ouralgorithm}
\begin{algorithmic}
\REQUIRE{{\color{black} $\delta\in(0,\Delta)$, $\Tilde{\epsilon}>0$, $x_0$ with $g(x_0) \leq 0$}}
\FOR{$k=0, 1, ..., $}
\STATE{Compute $\zeta_k=\mathcal{A}(x_k, h_{x_k}, \delta, \Tilde{\epsilon})$, an approximate minimal element of $\partial_\delta h_{x_k}(x_k)$}
\STATE{{\bf if } $\|\zeta_k\| \leq \Tilde{\epsilon}$ {\bf then return } $x_k$}
\STATE{$x_{k+1}=x_k-\delta \zeta_k / \|\zeta_k\|$, \ $k=k+1$}
\ENDFOR
\end{algorithmic}
\end{algorithm}

The evaluation of such a subroutine forms the inner loop of our method.
As two examples, Davis et al.~\cite{davis2022gradient} gave a randomized searching subroutine, which we denote as $RandSearch(x, h, \delta, \epsilon)$, and Kong and Lewis~\cite{kong2023cost} gave a deterministic linesearching subroutine defined below as $BisecSearch(x, h, \delta, \epsilon)$.
Our proposed algorithm can use any such subroutine as a black-box inner loop, formalized in Algorithm~\ref{ouralgorithm}. The following two lemmas show finite termination and a relationship between Goldstein stationarity on $h_{x_k}$ and GFJ and GKKT stationarity (proofs deferred to Section~\ref{section_analysis}).
\begin{lemma}
    Algorithm~\ref{ouralgorithm} always terminates with a feasible $x_k$ with
    $k \leq \left\lceil\frac{f(x_0)-p_\star}{C\delta \epsilon}\right\rceil$.
    \label{lemma_outeriterations}
\end{lemma}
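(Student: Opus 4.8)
The plan is to exploit the dual role played by the surrogate $h_{x_k}(z) = \max\{f(z)-f(x_k),\, g(z)\}$: a descent step on $h_{x_k}$ taken from a feasible point simultaneously decreases the true objective $f$ and preserves feasibility of the next iterate. The argument combines an induction on $k$ establishing feasibility of every iterate with a telescoping of the per-step objective decrease. First I would record the value of the surrogate at the current point: assuming (inductively) that $x_k$ is feasible, we have $g(x_k)\leq 0$ and hence $h_{x_k}(x_k)=\max\{0,g(x_k)\}=0$.

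Next I would invoke the defining property of the inner-loop subroutine $\mathcal{A}$ applied at $x_k$. The returned direction $\zeta_k$ either satisfies $\|\zeta_k\|\leq\tilde\epsilon$, in which case Algorithm~\ref{ouralgorithm} returns $x_k$ (feasible by the induction hypothesis) and we are finished, or else it furnishes the descent inequality $h_{x_k}(x_k)-h_{x_k}(x_{k+1})\geq C\delta\tilde\epsilon$ for $x_{k+1}=x_k-\delta\zeta_k/\|\zeta_k\|$. Substituting $h_{x_k}(x_k)=0$ gives $h_{x_k}(x_{k+1})\leq -C\delta\tilde\epsilon$.

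The crux is then to unpack this single scalar inequality into its two consequences. Since $h_{x_k}(x_{k+1})=\max\{f(x_{k+1})-f(x_k),\, g(x_{k+1})\}$, the bound $h_{x_k}(x_{k+1})\leq -C\delta\tilde\epsilon$ forces \emph{both} $f(x_{k+1})\leq f(x_k)-C\delta\tilde\epsilon$ \emph{and} $g(x_{k+1})\leq -C\delta\tilde\epsilon<0$. The second inequality certifies that $x_{k+1}$ is feasible, closing the induction (with base case the assumed feasibility of $x_0$); the first supplies a fixed objective decrease of $C\delta\tilde\epsilon$ at each non-terminating iteration. Finally I would telescope: after $k$ non-terminating iterations, feasibility gives $f(x_k)\geq p_\star$, while the accumulated decrease gives $f(x_k)\leq f(x_0)-kC\delta\tilde\epsilon$. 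Combining these yields $k\leq (f(x_0)-p_\star)/(C\delta\tilde\epsilon)$, so the method must have returned by iteration $\bigl\lceil (f(x_0)-p_\star)/(C\delta\tilde\epsilon)\bigr\rceil$, which is the claimed bound.

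I expect the main subtlety to be the careful bookkeeping of the induction rather than any hard estimate: feasibility of $x_k$ is \emph{needed} to evaluate $h_{x_k}(x_k)=0$, while feasibility of $x_{k+1}$ is a \emph{conclusion} of the descent step, so the two must be interleaved correctly, and one must separately observe that the iterate returned on the $\|\zeta_k\|\leq\tilde\epsilon$ branch inherits feasibility from the same induction. A secondary point is the role of randomness: in the randomized instantiation of $\mathcal{A}$, whose small-norm/descent guarantee holds only with probability $1-\tau$, the stated iteration count holds on the event that every inner call meets its specification, whereas the deterministic instantiation ($\tau=0$) delivers it unconditionally.
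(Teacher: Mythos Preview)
Your proof is correct and follows essentially the same approach as the paper: induct on feasibility, use $h_{x_k}(x_k)=0$ at a feasible $x_k$, invoke the subroutine's descent guarantee to get $\max\{f(x_{k+1})-f(x_k),g(x_{k+1})\}\leq -C\delta\tilde\epsilon$, extract both objective decrease and feasibility of $x_{k+1}$, and telescope against the optimal value $p_\star$. Your additional remarks on the interleaving of the induction and on the probabilistic caveat for the randomized subroutine are accurate elaborations but do not diverge from the paper's argument.
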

\begin{lemma}
    If a subroutine $\mathcal{A}$ reports a feasible $x$ is $(\delta,\epsilon)$-Goldstein stationary to $h_x$, then $x$ is $(\delta,\epsilon,3M\delta)$-GFJ stationary for problem~\eqref{mainproblem}. If {\color{black} $(\delta, \hat{\epsilon}, 2M\delta)$-GCQ} holds at $x$ with $\hat{\epsilon}>\epsilon$, then $x$ is $(\delta, \epsilon(\hat{\epsilon}+M)/(\hat{\epsilon}-\epsilon),3M\delta(\hat{\epsilon}+M)/(\hat{\epsilon}-\epsilon))$-GKKT stationary.
    \label{lemma_fjtokkt}
\end{lemma}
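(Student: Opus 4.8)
The plan is to unpack what the subroutine certifies and recombine it into the Lagrangian form demanded by Definitions~\ref{def_gfj} and~\ref{def_gkkt}. When $\mathcal{A}$ reports $(\delta,\epsilon)$-Goldstein stationarity it returns $\zeta = \sum_i w_i \mathcal{G}_{h_x}(z_i)$ with $\|\zeta\|\le\epsilon$, $z_i \in B(x,\delta)$, $w \ge 0$, and $\sum_i w_i = 1$. By Assumption~\ref{assumption3} each oracle value is either $\mathcal{G}_f(z_i)\in\partial f(z_i)$ or $\mathcal{G}_g(z_i)\in\partial g(z_i)$. First I would partition the indices into $I_f$ and $I_g$ according to which branch the oracle took, set $\gamma_0 = \sum_{i\in I_f} w_i$ and $\gamma_1 = \sum_{i\in I_g} w_i$, and write $\zeta = \gamma_0 u_f + \gamma_1 u_g$ where $u_f \in \partial_\delta f(x)$ and $u_g \in \partial_\delta g(x)$ are the corresponding normalized convex combinations (defined whenever their weights are positive). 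Since $\gamma_0+\gamma_1=1$, this immediately gives $\mathrm{dist}(0, \gamma_0\partial_\delta f(x)+\gamma_1\partial_\delta g(x))\le\|\zeta\|\le\epsilon$, the first GFJ requirement.

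The complementarity bound is the crux of Part~1, which I would establish by a dichotomy on the feasibility margin $g(x)$. If $g(x) < -2M\delta$, then $M$-Lipschitzness gives $g(z) \le g(x)+M\delta < -M\delta \le f(z)-f(x)$ for every $z\in B(x,\delta)$, so $g(z) < f(z)-f(x)$ holds strictly throughout the ball and Assumption~\ref{assumption3} forces every oracle call to return $\mathcal{G}_f$; hence $I_g=\emptyset$ and $\gamma_1 = 0$, making $\gamma_1|g(y)|=0$. Otherwise $g(x) \ge -2M\delta$, and combining this with feasibility $g(x)\le 0$ and Lipschitzness yields $-3M\delta \le g(y) \le M\delta$ on $B(x,\delta)$, so $|g(y)|\le 3M\delta$ and $\gamma_1|g(y)| \le 3M\delta$ since $\gamma_1\le 1$. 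Either way $\max_{y\in B(x,\delta)}|\gamma_1 g(y)|\le 3M\delta$, completing the GFJ claim with $\eta = 3M\delta$.

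For Part~2, I would set $\lambda = \gamma_1/\gamma_0$ and lower-bound $\gamma_0$ using GCQ. The key observation is the contrapositive of the dichotomy above: $\gamma_1 > 0$ forces $g(x) \ge -2M\delta$, exactly the near-active regime in which $(\delta,\hat\epsilon,2M\delta)$-GCQ applies to the single constraint $g$ and yields $\mathrm{dist}(0,\partial_\delta g(x))\ge\hat\epsilon$, hence $\|u_g\|\ge\hat\epsilon$. Rearranging $\gamma_1 u_g = \zeta - \gamma_0 u_f$, taking norms, and using $\|u_f\|\le M$ (every subgradient of the $M$-Lipschitz $f$, and every convex combination of them, has norm at most $M$) together with $\|\zeta\|\le\epsilon$ gives $\gamma_1\hat\epsilon \le \epsilon + \gamma_0 M$. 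Substituting $\gamma_1 = 1-\gamma_0$ and solving yields $\gamma_0 \ge (\hat\epsilon-\epsilon)/(\hat\epsilon+M) > 0$. Dividing $\zeta = \gamma_0 u_f + \gamma_1 u_g$ by $\gamma_0$ then certifies $\mathrm{dist}(0,\partial_\delta f(x)+\lambda\partial_\delta g(x)) \le \|\zeta\|/\gamma_0 \le \epsilon(\hat\epsilon+M)/(\hat\epsilon-\epsilon)$, while $\lambda \le 1/\gamma_0$ combined with the Part~1 estimate $|g(y)|\le 3M\delta$ gives the stated complementarity $\max_y|\lambda g(y)|\le 3M\delta(\hat\epsilon+M)/(\hat\epsilon-\epsilon)$.

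I expect the main obstacle to be making the threshold bookkeeping airtight: the value $c = 2M\delta$ in the GCQ hypothesis must be precisely the margin below which the $g$-branch of the oracle cannot fire, so that ``$\gamma_1 > 0$'' and ``GCQ is in force'' coincide. Care is also needed that $I_g$ handles tie points $f(z_i)-f(x)=g(z_i)$ correctly — it is the \emph{strict} inequality derived in the far-feasible case that rules out ties there and guarantees $\gamma_1 = 0$ outright, rather than merely bounding it.
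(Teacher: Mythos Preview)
Your proposal is correct and follows essentially the same route as the paper: partition the oracle calls into $I_f$ and $I_g$, set $\gamma_0=\sum_{I_f}w_i$, $\gamma_1=\sum_{I_g}w_i$, use $i\in I_g \Rightarrow g(z_i)\ge f(z_i)-f(x)\ge -M\delta$ to obtain the $3M\delta$ complementarity bound, then apply GCQ and the triangle inequality $\gamma_1\hat\epsilon\le\epsilon+\gamma_0 M$ to lower-bound $\gamma_0$ and rescale. The only cosmetic difference is that you organize Part~1 as an explicit dichotomy on whether $g(x)<-2M\delta$, whereas the paper derives $g(x)\ge -2M\delta$ from the existence of some $i\notin I_f$ only when needed in Part~2; the underlying inequalities are identical.
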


The Lagrange multiplier $\lambda_k$ associated with each iteration $x_k$ can be easily computed: Let $z_i$ and $w_i$ denote the points and weights associated with the construction of $\zeta_k$. By Assumption~\ref{assumption3}, letting $I_f =\{ i \mid \mathcal{G}_f(z_i)=\mathcal{G}_{h_{x_k}}(z_i)\}$, we have $\zeta_k=\sum_{i\in I_f} w_i\mathcal{G}_f(z_i)+\sum_{i\not\in I_f} w_i\mathcal{G}_g(z_i)$. Provided $\sum_{i\in I_f} w_i>0$, which GCQ ensures, $\lambda_k=\sum_{i\not\in I_f}w_i/\sum_{i\in I_f}w_i$ certifies approximate GKKT stationarity.

\subsection{Oracles and Minimum Norm Subroutine of Davis et al.~\texorpdfstring{\cite{davis2022gradient}}{}} \label{subsec:Damek}
Note that the assumed uniform Lipschitz continuity ensures gradients exist almost everywhere for $f$, $g$, and $h_{x_k}$ (by Rademacher). Davis et al.~\cite{davis2022gradient} leverage this by only requiring subgradients at randomly sampled points that are differentiable with probability one. Such an oracle can often be produced via automatic differentiation.
\begin{assumptionp}{\text{3a}}
    Oracles computing $\mathcal{G}_f(x) = \nabla f(x)$ and $\mathcal{G}_g(x) = \nabla g(x)$ almost everywhere in $\mathbb{R}^n$ are known.
    \label{assumption3a}
\end{assumptionp}

Given these oracles, a gradient oracle for $h_{x_k}$ almost everywhere can be easily constructed, returning $\nabla f(x)$ if $f(x)-f(x_k) \geq g(x)$ and $\nabla g(x)$ otherwise. Using this oracle, the randomized search subroutine of~\cite{davis2022gradient} finds an approximate minimum norm Goldstein subgradient as defined in Algorithm~\ref{davisalgorithm}. The natural corresponding subdifferential is the set of all Clarke subgradients, given by $\partial h(x) = \mathrm{conv}\left\{ \lim_{i\rightarrow\infty} \nabla h(x_i) \mid x_i\rightarrow x,\ x_i\in\mathrm{dom}(\nabla h) \right\}$.

\subsection{Oracles and Minimum Norm Subroutine of Kong et al.~\texorpdfstring{\cite{kong2023cost}}{}} \label{subsec:Lewis}
Alternatively, Kong and Lewis~\cite{kong2023cost} showed an entirely deterministic approach could be used at the cost of requiring additional mild assumptions and a stronger subgradient-type oracle.
The directional derivative of any function $h:\mathbb{R}^n \to \mathbb{R}$ is defined by
\begin{align*}
    \nabla_v h(x)=\lim_{t \to 0^+}\frac{h(x+tv)-h(x)}{t} \ ,
\end{align*}
for any point $x \in \mathbb{R}^n$ and any direction $v \in \mathbb{R}^n$. Kong and Lewis~\cite{kong2023cost}'s subroutine requires an oracle able to compute subgradient-like vectors in agreement with this directional derivative and bounds on the following quantities. 
For any one-dimensional function $\ell \colon [0,\delta]\rightarrow \mathbb{R}$, we denote the ``concave deviation'' by
$$ c_\delta(\ell ) = \inf\{ M \geq 0 \mid \ell + s \text{ is convex for some convex $M$-Lipschitz } s\colon [0,\delta]\rightarrow \mathbb{R} \} \ . $$
This notion extends to multivariate functions $h\colon \mathbb{R}^n\rightarrow \mathbb{R}$ by considering each restriction of $h$ to a line segment $\ell_{h,x,y}(r)=h(x+r(y-x)/\delta)$ for $\|x-y\|\leq \delta$. We denote the nonconvexity modulus of $h$ by the largest concave deviation among these $$\Lambda_h(\delta) = \sup_{\|x-y\|\leq \delta}\{c_\delta(\ell_{h,x,y})\} \ .$$
\begin{assumptionp}{\text{3b}}
    The functions $f$ and $g$ are directionally differentiable, have oracles producing directional subgradient maps $F(x,v)$, $G(x,v)$ satisfying $\langle F(x,v),v\rangle=\nabla_v f(x)$, $\langle G(x,v),v\rangle=\nabla_v g(x)$, and have $\Lambda_f(\delta)$ and $\Lambda_g(\delta)$ both finite.
    \label{assumption3b}
\end{assumptionp}
\begin{algorithm}[t!]
\caption{{\color{black} $RandSearch(z, h_z, \delta, \epsilon)$}}
\label{davisalgorithm}
\begin{algorithmic}
\REQUIRE{{\color{black} a feasible $z$, $h_z$, $\delta\in(0,\Delta)$, $\epsilon>0$}}
\STATE{Let $t=0$, $\zeta_0=\nabla h_{z}(y_0)$, where $y_0 \sim B(z, \delta)$}
\WHILE{$\|\zeta_t\|>\epsilon$ and $\delta \|\zeta_t\|/4 \geq h_{z}(z)-h_{z}(z-\delta \zeta_t/\|\zeta_t\|)$}
\STATE{Choose any $r$ satisfying $0<r<\|\zeta_t\|\cdot\sqrt{1-(1-\|\zeta_t\|^2/128M^2)^2}$}
\STATE{Sample $y_t$ uniformly from $B(\zeta_t, r)$}
\STATE{Choose $s_t$ uniformly at random from the segment $[z, z-\delta y_t/\|y_t\|]$}
\STATE{$\zeta_{t+1}=\argmin_{\zeta \in [\zeta_t, \nabla h_{z}(s_t)]}\|\zeta\|$, \ $t=t+1$}
\ENDWHILE
\end{algorithmic}
\end{algorithm}
\begin{algorithm}[t!]
\caption{{\color{black} $BisecSearch(z, h_z, \delta, \epsilon)$}}
\label{lewisalgorithm}
\begin{algorithmic}
\REQUIRE{{\color{black} a feasible $z$, $h_z$, $\delta\in(0,\Delta)$, $\epsilon>0$}}
\STATE{Let $t=0$, $\zeta_0=\nabla h_{z}(z)$}
\WHILE{$\|\zeta_t\|>\epsilon$ and $\delta \epsilon/3>h_{z}(z)-h_{z}(z-\delta \hat{\zeta}_t)$}
\STATE{Use {\color{black} bisection~\cite[Algorithm 1]{kong2023cost}} to find $r_t$ satisfying $\langle H_z(z_t(r_t),\hat{\zeta}_t), \hat{\zeta}_t \rangle-\epsilon/2<0$}
\STATE{$\zeta_{t+1}=\argmin_{\zeta \in [\zeta_t, H_z(z_t(r_t), \hat{\zeta}_t)]}\|\zeta\|$, \ $t=t+1$}
\ENDWHILE
\end{algorithmic}
\end{algorithm}

For any $x,z,v\in\mathbb{R}^n$, we define the associated directional subgradient map for $h_x$ as
$$ H_x(z,v) = \begin{cases} F(z,v) \text{ if } f(z)-f(x)>g(z) \text{, or } f(z)-f(x)=g(z) \text{ and } \nabla_v f(z) \geq \nabla_v g(z)\\
G(z,v) \text{ if } f(z)-f(x)<g(z) \text{, or } f(z)-f(x)=g(z) \text{ and } \nabla_v f(z) < \nabla_v g(z) \ . \end{cases}$$
This is justified by noting the directional derivative of $h_{x}$ at $z$ is given by
\begin{align*}
    \nabla_v h_{x}(z)=\begin{cases}
        \nabla_v f(z), &{f(z)-f(x)>g(z)}\\
        \max\{\nabla_v f(z), \nabla_v g(z)\}, &{f(z)-f(x)=g(z)} \\
        \nabla_v g(z), &{f(z)-f(x)<g(z)} \ .
    \end{cases}
\end{align*}
Note $\Lambda_{h_x}(\delta) \leq \Lambda_f(\delta)+\Lambda_g(\delta)<\infty$
since any convex Lipschitz functions $s_f,s_g$ such that $\ell_{f,x,y}+s_f$ and $\ell_{g,x,y}+s_g$ are convex have $\ell_{h_z,x,y}+s_f+s_g$ convex.

Suppressing the dependence on the given direction $v$, we denote $\mathcal{G}_f(z) = F(z,v)$, $\mathcal{G}_g(z) = G(z,v)$, and  $\mathcal{G}_{h_x}(z) = H_x(z,v)$.
One corresponding subdifferential is the set of all directional subgradients, $\partial h(x)=\{\zeta \mid \exists v \in \mathbb{R}^n \ \mathrm{s.t.\ } \langle \zeta,v \rangle=\nabla_v h(x)\}$.

Using such oracles, the bisection search subroutine of~\cite{kong2023cost} finds an approximate minimum norm Goldstein subgradient as defined in Algorithm~\ref{lewisalgorithm}. For a given $z\in\mathbb{R}^n$, this subroutine constructs a sequence of Goldstein subgradients $\zeta_t$ of $h_z$ at $z$, eventually satisfying one of the needed conditions.
Defining $\hat{\zeta}_t=\zeta_t/\|\zeta_t\|$, $z_t(r)=z+(r-\delta)\hat{\zeta}_t$ and $l_t(r)=h_z(z_t(r))-\epsilon r/2$ on $[0,\delta]$, knowing $l_t(0)>l_t(\delta)$ (an average rate of decrease), the critical step in this construction is the {\color{black} bisection~\cite[Algorithm 1]{kong2023cost}}, finding a location where the right derivative $l_{t+}'(r)=\lim_{\Delta r \to 0^+}(l_t(r+\Delta r)-l_t(r))/\Delta r$ of $l_t(r)$ is negative by bisection. Finite nonconvexity modulus guarantees termination.
\section{Convergence Guarantees and Analysis}
\label{section_analysis}

We provide two main theorems on FJ and KKT guarantees, using Algorithm~\ref{davisalgorithm} or~\ref{lewisalgorithm}, respectively, as the subroutines. These follow primarily from our Lemmas~\ref{lemma_outeriterations} and~\ref{lemma_fjtokkt}. 
\begin{theorem}    \label{theorem_FJ}
    Given Assumptions~\ref{assumption1} and \ref{assumption2}, for any $\epsilon>0$ and $0<\delta<\Delta$, Algorithm~\ref{ouralgorithm} with $\Tilde{\epsilon}=\epsilon$ will terminate with an $(\delta, \epsilon,3M\delta)$-GFJ stationary point. Under Assumption~\ref{assumption3a} and using Algorithm~\ref{davisalgorithm} as the subroutine, with probability $1-\tau$, this requires at most
    $$\left\lceil\frac{4(f(x_0)-p_\star)}{\delta \epsilon}\right\rceil \left\lceil\frac{64M^2}{\epsilon^2}\right\rceil \left\lceil2\log\left(\frac{4(f(x_0)-p_\star)}{\tau\delta \epsilon}\right)\right\rceil$$
    first-order oracle calls. Under Assumption~\ref{assumption3b} and using Algorithm~\ref{lewisalgorithm}, this becomes
    $$\left\lceil\frac{3(f(x_0)-p_\star)}{\delta \epsilon}\right\rceil \left\lceil\frac{16M^2}{\epsilon^2}\right\rceil \left(1+\left\lfloor\frac{12(\Lambda_f(\delta)+\Lambda_g(\delta))}{\epsilon}\right\rfloor\right) \ . $$
\end{theorem}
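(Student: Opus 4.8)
The plan is to obtain correctness from the two structural lemmas already established, and then bound the total oracle cost as the product of the number of outer iterations of Algorithm~\ref{ouralgorithm} and the per-call cost of the inner subroutine. For correctness, observe that Algorithm~\ref{ouralgorithm} returns $x_k$ only when $\|\zeta_k\| \leq \Tilde{\epsilon} = \epsilon$, and in that case the constructed $\zeta_k \in \partial_\delta h_{x_k}(x_k)$ certifies that $x_k$ is $(\delta,\epsilon)$-Goldstein stationary to $h_{x_k}$. Lemma~\ref{lemma_fjtokkt} then immediately yields that $x_k$ is $(\delta,\epsilon,3M\delta)$-GFJ stationary for~\eqref{mainproblem}, while feasibility is preserved along the run and finite termination is guaranteed by Lemma~\ref{lemma_outeriterations}. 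This portion of the argument is independent of which subroutine instantiates $\mathcal{A}$.

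It then remains to count oracle calls, which factors as (number of outer iterations) $\times$ (oracle calls per inner subroutine). First I would read off the descent constant $C$ from each subroutine's while-loop exit condition: when $RandSearch$ (Algorithm~\ref{davisalgorithm}) halts with $\|\zeta_t\| > \epsilon$, the violated guard forces $h_{x_k}(x_k) - h_{x_k}(x_k - \delta\zeta_t/\|\zeta_t\|) > \delta\|\zeta_t\|/4 > \delta\epsilon/4$, so $C = 1/4$; analogously $BisecSearch$ (Algorithm~\ref{lewisalgorithm}) delivers descent at least $\delta\epsilon/3$, so $C = 1/3$. Substituting these into Lemma~\ref{lemma_outeriterations} produces the outer-iteration factors $\lceil 4(f(x_0)-p_\star)/(\delta\epsilon)\rceil$ and $\lceil 3(f(x_0)-p_\star)/(\delta\epsilon)\rceil$, respectively.

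For the inner cost under Assumption~\ref{assumption3a}, I would invoke the guarantee of Davis et al.~\cite{davis2022gradient} for $RandSearch$ applied to $h_{x_k}$, using that $h_{x_k}=\max\{f-f(x_k),g\}$ is $M$-Lipschitz as a max of $M$-Lipschitz functions and that its almost-everywhere gradient oracle built from $\nabla f,\nabla g$ satisfies Assumption~\ref{assumption3}. Their analysis gives a per-call cost $T(M,\epsilon,\delta,\tau')=\lceil 64M^2/\epsilon^2\rceil\lceil 2\log(1/\tau')\rceil$ at per-call failure tolerance $\tau'$. Setting $\tau' = \tau/K$ with $K=\lceil 4(f(x_0)-p_\star)/(\delta\epsilon)\rceil$, a union bound over the at most $K$ subroutine calls yields overall success probability $\geq 1 - K\cdot(\tau/K)=1-\tau$, and since $K/\tau = 4(f(x_0)-p_\star)/(\tau\delta\epsilon)$ the product $K\cdot T(M,\epsilon,\delta,\tau/K)$ recovers the stated randomized bound. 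For the deterministic case under Assumption~\ref{assumption3b}, $BisecSearch$ needs no union bound; the analysis of Kong and Lewis~\cite{kong2023cost} caps the min-norm update steps by $\lceil 16M^2/\epsilon^2\rceil$ and the oracle calls inside each bisection by $1+\lfloor 12\Lambda_{h_{x_k}}(\delta)/\epsilon\rfloor$, so I would substitute the bound $\Lambda_{h_{x_k}}(\delta)\leq \Lambda_f(\delta)+\Lambda_g(\delta)$ established in Section~\ref{subsec:Lewis} (by adding the convexifying shifts of $f$ and $g$) and multiply by the outer count.

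The main obstacle I anticipate is justifying that each subroutine, when run on the \emph{moving} objective $h_{x_k}$ rather than a fixed function, genuinely meets the black-box specification demanded of $\mathcal{A}$: namely, that the Lipschitz constant $M$ and (for $BisecSearch$) the nonconvexity modulus of $h_{x_k}$ are controlled uniformly in $k$ purely by the data of $f$ and $g$, and that Davis et al.'s randomized progress guarantee composes cleanly across outer iterations when the per-call tolerance is taken to be $\tau/K$. Once these verifications are in place, the complexity bounds follow by straightforward multiplication of the factors identified above.
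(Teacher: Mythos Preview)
Your proposal is correct and follows essentially the same route as the paper: apply Lemma~\ref{lemma_outeriterations} for finite outer termination, Lemma~\ref{lemma_fjtokkt} for the GFJ conclusion, and then multiply by the per-call subroutine costs from~\cite{davis2022gradient} and~\cite{kong2023cost} with $C=1/4$ and $C=1/3$ respectively. Your explicit union-bound step (choosing per-call tolerance $\tau/K$) is exactly what produces the $\log\bigl(4(f(x_0)-p_\star)/(\tau\delta\epsilon)\bigr)$ factor in the randomized bound; the paper's proof leaves this implicit but arrives at the same expression.
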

\begin{proof}
According to Lemma~\ref{lemma_outeriterations}, we attain a $(\delta,\epsilon)$-Goldstein stationary point $x_k$ to $h_{x_k}$ with $g(x_k)<0$ with
$ k \leq \left\lceil\frac{f(x_0)-p_\star}{C\delta \epsilon}\right\rceil. $
According to Lemma~\ref{lemma_fjtokkt}, $x$ is a $(\delta,\epsilon,3M\delta)$-GFJ stationary solution for problem~\eqref{mainproblem}. By {\color{black} Corollary 5} in~\cite{davis2022gradient}, each call to Algorithm~\ref{davisalgorithm} uses at most $\lceil 64M^2/\epsilon^2\rceil\lceil2\log(1/\tau)\rceil$ gradient evaluations with probability at least $1-\tau$ and $C=1/4$. Likewise, by {\color{black} Theorem 1 and Corollary 1 with $\sigma=\epsilon/6$} in~\cite{kong2023cost}, Algorithm~\ref{lewisalgorithm} requires no more than $\lceil 16M^2/\epsilon^2\rceil (1+\lfloor12(\Lambda_f(\delta)+\Lambda_g(\delta))/\epsilon\rfloor)$ subgradient evaluations with $C=1/3$.
\end{proof}

\begin{theorem}
    Given Assumptions~\ref{assumption1}, \ref{assumption2}, for any $\epsilon>0$ and $0<\delta<\Delta$ such that {\color{black} $(\delta, \sigma, 2M\delta)$-GCQ} holds for all $x$, Algorithm~\ref{ouralgorithm} with $\Tilde{\epsilon}=\sigma\epsilon/(\epsilon+\sigma+M)$ will terminate with an $(\delta,\epsilon,3M\delta(\epsilon+\sigma+M)/\sigma)$-Goldstein KKT point. Under Assumption~\ref{assumption3a} and using Algorithm~\ref{davisalgorithm} as the subroutine, with probability $1-\tau$, this requires at most
    $$\left\lceil\frac{4(f(x_0)-p_\star)}{\delta\Tilde{\epsilon}}\right\rceil \left\lceil\frac{64M^2}{\Tilde{\epsilon}^2}\right\rceil \left\lceil2\log\left(\frac{4(f(x_0)-p_\star)}{\tau\delta \Tilde{\epsilon}}\right)\right\rceil$$
    first-order oracle calls. Under Assumption~\ref{assumption3b} and using Algorithm~\ref{lewisalgorithm}, this becomes
    $$\left\lceil\frac{3(f(x_0)-p_\star)}{\delta\Tilde{\epsilon}}\right\rceil \left\lceil\frac{16M^2}{\Tilde{\epsilon}^2}\right\rceil \left(1+\left\lfloor\frac{12(\Lambda_f(\delta)+\Lambda_g(\delta))}{\Tilde{\epsilon}}\right\rfloor\right) \ .$$
    \label{theorem_KKT}
\end{theorem}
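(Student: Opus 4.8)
The plan is to follow the same two-stage template as the proof of Theorem~\ref{theorem_FJ}, but to run Algorithm~\ref{ouralgorithm} with the shrunken inner tolerance $\Tilde{\epsilon} = \sigma\epsilon/(\epsilon+\sigma+M)$ and then pass through the GKKT branch (rather than the GFJ branch) of Lemma~\ref{lemma_fjtokkt}. First I would invoke Lemma~\ref{lemma_outeriterations} with $\Tilde{\epsilon}$ playing the role of $\epsilon$: the method terminates at some feasible $x_k$ that the subroutine has certified to be $(\delta,\Tilde{\epsilon})$-Goldstein stationary for $h_{x_k}$, after at most $\lceil (f(x_0)-p_\star)/(C\delta\Tilde{\epsilon})\rceil$ outer iterations.

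The crux is the conversion step, and it is here that the particular form of $\Tilde{\epsilon}$ pays off. Since $\epsilon < \epsilon+\sigma+M$ we have $\Tilde{\epsilon} < \sigma$, so the hypothesis $\hat\epsilon > \epsilon$ of Lemma~\ref{lemma_fjtokkt} is satisfied under the identification $\hat\epsilon = \sigma$ with the subroutine tolerance equal to $\Tilde{\epsilon}$. The theorem assumes $(\delta,\sigma,2M\delta)$-GCQ holds for all $x$, hence in particular at $x_k$, so the GKKT branch applies and gives $(\delta,\ \Tilde{\epsilon}(\sigma+M)/(\sigma-\Tilde{\epsilon}),\ 3M\delta(\sigma+M)/(\sigma-\Tilde{\epsilon}))$-GKKT stationarity. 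A short computation then collapses these parameters to the claimed values: from $\sigma-\Tilde{\epsilon} = \sigma(\sigma+M)/(\epsilon+\sigma+M)$ one gets $\Tilde{\epsilon}(\sigma+M)/(\sigma-\Tilde{\epsilon}) = \epsilon$ and $3M\delta(\sigma+M)/(\sigma-\Tilde{\epsilon}) = 3M\delta(\epsilon+\sigma+M)/\sigma$, so $x_k$ is exactly $(\delta,\epsilon,3M\delta(\epsilon+\sigma+M)/\sigma)$-GKKT stationary.

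For the oracle-call counts I would reuse the per-call inner-loop bounds exactly as in Theorem~\ref{theorem_FJ}, substituting $\Tilde{\epsilon}$ for $\epsilon$ throughout. Multiplying the outer bound of Lemma~\ref{lemma_outeriterations} (with $C=1/4$ for Algorithm~\ref{davisalgorithm} and $C=1/3$ for Algorithm~\ref{lewisalgorithm}) by the respective inner bound --- Corollary 5 of~\cite{davis2022gradient} giving $\lceil 64M^2/\Tilde{\epsilon}^2\rceil$ gradient evaluations per successful inner loop, and Theorem 1 with Corollary 1 of~\cite{kong2023cost} (with their internal threshold set to $\Tilde{\epsilon}/6$) giving $\lceil 16M^2/\Tilde{\epsilon}^2\rceil(1+\lfloor 12(\Lambda_f(\delta)+\Lambda_g(\delta))/\Tilde{\epsilon}\rfloor)$ subgradient evaluations --- reproduces the two displayed complexities.

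The one genuinely delicate point, and the place I expect to spend the most care, is the probability accounting in the randomized case: each of the (at most) $\lceil 4(f(x_0)-p_\star)/(\delta\Tilde{\epsilon})\rceil$ calls to Algorithm~\ref{davisalgorithm} succeeds only with some probability, so to secure an overall success probability of $1-\tau$ I would run each inner call at failure tolerance $\tau' = \tau/\lceil 4(f(x_0)-p_\star)/(\delta\Tilde{\epsilon})\rceil$ and apply a union bound over the outer iterations. This turns the single-call factor $\lceil 2\log(1/\tau')\rceil$ into the $\lceil 2\log(4(f(x_0)-p_\star)/(\tau\delta\Tilde{\epsilon}))\rceil$ factor appearing in the statement; the deterministic Algorithm~\ref{lewisalgorithm} requires no such accounting, which is why its bound carries no logarithmic term. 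Apart from this bookkeeping, the theorem is a direct consequence of Lemmas~\ref{lemma_outeriterations} and~\ref{lemma_fjtokkt}, the real work having already been done there and in the calibrated choice of $\Tilde{\epsilon}$.
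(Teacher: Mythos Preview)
Your proposal is correct and follows essentially the same route as the paper: invoke Lemma~\ref{lemma_outeriterations} with tolerance $\Tilde{\epsilon}$, observe $\Tilde{\epsilon}<\sigma$ so the GKKT branch of Lemma~\ref{lemma_fjtokkt} applies with $\hat\epsilon=\sigma$, verify the algebra collapses the parameters to $(\delta,\epsilon,3M\delta(\epsilon+\sigma+M)/\sigma)$, and then import the inner-loop bounds from the proof of Theorem~\ref{theorem_FJ}. Your explicit union-bound argument for the randomized case, setting the per-call failure tolerance to $\tau' = \tau/\lceil 4(f(x_0)-p_\star)/(\delta\Tilde{\epsilon})\rceil$, is in fact more carefully spelled out than the paper's own treatment, which leaves this step implicit.
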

\begin{proof}
By Lemmas~\ref{lemma_outeriterations} and~\ref{lemma_fjtokkt}, our choice $\Tilde{\epsilon}=\frac{\sigma\epsilon}{\epsilon+\sigma+M}<\sigma$ ensures $x_k$ is a feasible $(\delta,\Tilde{\epsilon},3M\Tilde{\epsilon})$-GFJ point. Then {\color{black} $(\delta,\sigma,2M\delta)$-GCQ} implies $x_k$ is $(\delta,\epsilon,3M\delta(\epsilon+\sigma+M)/\sigma)$-GKKT stationary, where $\epsilon$ is derived by
$\epsilon=\Tilde{\epsilon}(\sigma+M)/(\sigma-\Tilde{\epsilon})$,
and $3M\delta(\epsilon+\sigma+M)/\sigma=3M\delta(\sigma+M)/(\sigma-\Tilde{\epsilon})$. Using the same subroutine bounds of~\cite{davis2022gradient,kong2023cost} used in the prior proof of Theorem~\ref{theorem_FJ} finishes the proof.
\end{proof}

\subsection{Proof of Lemma~\ref{lemma_outeriterations}}
We prove this by inductively showing feasibility is maintained and the objective value decreases by at least $C\delta\epsilon$ each iteration until $\|\zeta_k\|\leq \epsilon$. Supposing $\|\zeta_k\|>\epsilon$, our black-box subroutine finds $h_{x_k}(x_k)-h_{x_k}(x_{k+1}) \geq C\delta \epsilon$. Since $h_x(x)=\max\{f(x)-f(x), g(x)\}=0$ whenever $g(x) \leq 0$, we know $\max\{f(x_{k+1})-f(x_k),g(x_{k+1})\} \leq -C\delta \epsilon.$
Thus we have descent $f(x_k)-f(x_{k+1}) \geq C\delta \epsilon$ and maintain feasibility $g(x_{k+1}) \leq -C\delta \epsilon$. Given an initial objective gap of $f(x_0)-p_\star$, this occurs at most $\left\lceil\frac{f(x_0)-p_\star}{C\delta \epsilon}\right\rceil$ times.

\subsection{Proof of Lemma~\ref{lemma_fjtokkt}}
{\color{black} If $x$ is $(\delta,\epsilon)$-Goldstein stationary to $h_x$, there exists $t$ points $z_i \in B(x, \delta)$ with (sub)gradients $\mathcal{G}_{h_x}(z_i) \in \partial h_x(z_i)$ and weights $w > 0$ with $\sum_{i=1}^tw_i=1$ such that $\zeta = \sum_{i=1}^t w_i\mathcal{G}_{h_x}(z_i)$ has $\|\zeta\| \leq \epsilon$.}
By Assumption~\ref{assumption3}, letting $I_f =\{ i \mid \mathcal{G}_f(z_i)=\mathcal{G}_{h_{x}}(z_i)\}$, we have $\zeta=\sum_{i\in I_f} w_i\mathcal{G}_f(z_i)+\sum_{i\not\in I_f} w_i\mathcal{G}_g(z_i)$. This motivates selecting $\gamma_0 = \sum_{i\in I_f} w_i \geq 0$ and $\gamma = \sum_{i\not\in I_f} w_i \geq 0$. Indeed this selection has $\gamma_0+\gamma=1$ and $\mathrm{dist}(0, \gamma_0\partial_\delta f(x)+\gamma\partial_\delta g(x)) \leq \epsilon$.
Lastly, for establishing approximate Goldstein Fritz-John stationarity, we verify complementary slackness. This trivially holds if $\gamma=0$. Otherwise let $i\not\in I_f$ and observe that $|g(z)| \leq 3M\delta$ for any $z\in B(x,\delta)$ since we have
$$ g(z) = g(z) - g(x) + g(x) \leq M\delta $$
using $M$-Lipschitz continuity of $g$ on $\|z-x\|\leq \delta$ and the feasibility of $x$, and we have
$$ g(z) = g(z) - g(z_i) + g(z_i) \geq -3M\delta $$
using $M$-Lipschitz continuity of $g$ on $\|z_i-z\|\leq 2\delta$ and $g(z_i) \geq f(z_i)-f(x) \geq -M\delta$. Since $\gamma\leq 1$, $\max_{z\in B(x,\delta)} |\gamma g(z)|\leq 3M\delta$ and so $x$ is a $(\delta,\epsilon,3M\delta)$-GFJ point.

{\color{black} It directly follows from above that $x$ is a $(\delta,\epsilon,3M\delta)$-GKKT point if $\gamma=0$. Otherwise, assuming $(\delta, \hat{\epsilon}, 2M\delta)$-GCQ holds, we first claim $\gamma_0 \geq \frac{\hat{\epsilon}-\epsilon}{\hat{\epsilon}+M} > 0$ since for any $i\notin I_f$, $g(x)=g(x)-g(z_i)+g(z_i)\geq -2M\delta$ using $M$-Lipschitz continuity of $g$ on $\|z_i-x\|\leq\delta$ and previously derived $g(z_i)\geq -M\delta$, and based on this we have}
$$(1-\gamma_0) \hat{\epsilon}=\gamma \hat{\epsilon} \leq \left\|\sum_{i\not\in I_f} w_i\mathcal{G}_g(z_i)\right\| \leq \left\|\sum_{i=1}^t w_i\mathcal{G}_{h_{x}}(z_i)\right\|+\left\|\sum_{i\in I_f} w_i\mathcal{G}_f(z_i)\right\| \leq \epsilon+\gamma_0 M \ .$$
As a result, consider the Lagrangian multiplier $ 0 \leq \lambda := \gamma/\gamma_0 \leq \frac{\hat{\epsilon} + M}{\hat{\epsilon}-\epsilon} - 1$. Then $(\delta, \epsilon(\hat{\epsilon}+M)/(\hat{\epsilon}-\epsilon),3M\delta(\hat{\epsilon}+M)/(\hat{\epsilon}-\epsilon))$-GKKT stationary for problem~\eqref{mainproblem} follows as 
\begin{align*}
    \mathrm{dist}(0, \partial_\delta f(x)+\lambda \partial_\delta g(x)) \leq \frac{\epsilon}{\gamma_0} \leq \frac{\epsilon(\hat{\epsilon}+M)}{\hat{\epsilon}-\epsilon} \ ,\\
    \max_{z\in B(x,\delta)} |\lambda g(z)| \leq \frac{\max_{z\in B(x,\delta)} |\gamma g(z)|}{\gamma_0} \leq \frac{3M\delta (\hat{\epsilon}+M)}{\hat{\epsilon}-\epsilon} \ .
\end{align*}


\noindent {\bf Statements and Declarations.} Benjamin Grimmer was supported by the Air Force Office of Scientific Research under award number FA9550-23-1-0531. No data sets were used in or generated by this work. The authors have no competing interests to declare.

\bibliography{sn-bibliography}

\end{document}